\newtheorem{Satz}{Theorem}
\newtheorem*{Satz*}{Theorem A}
\newtheorem{Lemma}{Lemma}
\theoremstyle{definition}
\newcommand{\RR}{\mathbb R}
\newcommand{\NN}{\mathbb N}
\begin{document}
\nocite{*}

\title{Weak Convergence of the Weighted Sequential Empirical Process of some Long-Range Dependent Data}
\author{Jannis Buchsteiner\thanks{~E-mail: \texttt{jannis.buchsteiner@rub.de}\newline Research supported by Collaborative Research Center SFB 823 {\em
Statistical modeling of nonlinear dynamic processes.}\newline NOTICE: this is the author's version of a work that was accepted for publication in Statistics and Probability Letters. Changes resulting from the publishing process, such as peer review, editing, corrections, structural formatting, and other quality control mechanisms may not be reflected in this document. Changes may have been made to this work since it was submitted for publication. A definitive version was subsequently published in \textit{Statistics and Probability Letters} (2015), 96, pp. 170-179.} \\ \normalsize{\textit{Fakultät für Mathematik, Ruhr-Universität Bochum,
Germany.}}}
\date{}
\maketitle

\begin{abstract}
 Let $(X_k)_{k\geq1}$ be a Gaussian long-range dependent process with $EX_1=0$, $EX_1^2=1$ and covariance function $r(k)=k^{-D}L(k)$.
For any measurable function $G$ let $(Y_k)_{k\geq1}=(G(X_k))_{k\geq1}$. We study the asymptotic behaviour of the associated sequential empirical process
$\left(R_N(x,t)\right)$ with respect to a weighted sup-norm $\|\cdot\|_w$. We show that, after an appropriate normalization, $\left(R_N(x,t)\right)$ converges
weakly in the space of càdlàg functions with finite weighted norm to a Hermite process.\\

\noindent {\sf\textbf{Keywords:}} Sequential empirical process; long-range dependence; weighted norm; modified functional delta method
\end{abstract}

\section{Introduction}
Given a stationary stochastic process $(Y_j)_{j\geq 1}$, with marginal distribution function $F(x)=P(Y_1\leq x)$, we define the sequential empirical process
\begin{equation*}
 R_N(x,t)=\sum_{j=1}^{\lfloor Nt\rfloor} \left( 1_{\{Y_j\leq x  \}} -F(x)  \right),\, x\in \RR, 0\leq t\leq 1.
\end{equation*}
This process plays an important role in statistics, e.g. in the study of nonparametric change-point tests. The asymptotic distribution of
the sequential empirical process was initially determined by \citet{Mueller}, and independently \citet{Kiefer}, who both studied the case when the underlying
data $(Y_j)_{j\geq 1}$ are independent and identically distributed. In this case,  $N^{-1/2} R_N(x,t)$ converges in distribution towards a mean-zero Gaussian
process $K(x,t)$ with covariance structure $E(K(x,s)K(y,t))=(s\wedge t) (F(x\wedge y)-F(x)\, F(y))$. The process $K(x,t)$ is also called a Kiefer-Müller
process. \citet*{Major} proved an almost sure approximation theorem for the sequential empirical process with sharp rates, again in the case of i.i.d. data.

Sequential empirical processes of dependent data have been studied by a large number of authors, e.g.
\citet{Berkes77} and \citet{Philipp} for strongly mixing processes, and \citet*{Berkes} for so called S-mixing processes. For long-range dependent data, the
sequential empirical process was first studied by \citet{Dehling}, in the case of a Gaussian subordinated process. \citet{Giraitis} used similar techniques to
establish weak convergence if the underlying data is a long memory moving average process.\\
Under some technical conditions, \citet{Dehling} prove convergence of the normalized sequential empirical process in the space
$D([-\infty,\infty]\times [0,1])$  towards a process of the type $ J(x) Z(t)$, $x\in \RR, 0\leq
t\leq 1$, where $J:\RR\rightarrow \RR$ is a deterministic function and $(Z(t))_{0\leq t\leq 1}$ is a Hermite process.

In the present paper, we consider the above result with regard to the weighted sequential empirical process $w(x)R_N(x,t)$, where $w(x)=(1+|x|)^\lambda$, for
some $\lambda>0$. Therefore we equip the function space 
\begin{equation*}
  D_w([-\infty,\infty]\times [0,1]):= \{f\in D([-\infty,\infty]\times [0,1]): \sup_{x\in\RR,t\in[0,1]} |w(x)f(x,t)| <\infty \},
\end{equation*}
with the weighted sup-norm $\|f\|_w:=\sup|w(x)f(x,t)|$ and show that the result of Dehling and Taqqu takes place in this normed subspace of
$D([-\infty,\infty]\times [0,1])$.

The asymptotic distribution of the weighted one-parameter empirical process $(R_N(x,1))$ has been studied for i.i.d. data by \citet{Cibisov} and
\citet{O'Reilly}. \citet{Shao} treated the cases when the underlying data are strong mixing, $\rho$-mixing and associated. Recently, \citet*{Wu} studied
empirical process convergence with
respect to weighted norms for linear long-range dependent data. 

Weak convergence of the empirical process with respect to weighted supremum norms has been applied by \citet{Beutner} in their study of the
asymptotic behaviour of the distortion risk measure. They developed a modified functional delta method (MFDM) which requires only quasi-Hadamard
differentiability on the one hand, but weighted convergence of the empirical process on the other hand. By using the MFDM, \citet{Beutner12} also determined the
asymptotic distribution of U- and V-statistics with an unbounded kernel. The weight functions arising in this context are functions of $x$ only. More generally
one could study weight functions $w(x,t)$. However, this is beyond the scope of the present paper.

\section{Definitions and Main Results}
We consider a stationary Gaussian process $(X_j)_{j\geq 1}$ with $EX_1=0$, $EX_1^2=1$ and covariance function $r(k)=EX_1X_{k+1}$, which satisfies
\begin{equation}\label{covariance}
r(k)=k^{-D}L(k),
\end{equation}
where $L$ is a slowly varying function at infinity and $0<D<1$. Such a sequence is called a Gaussian long-range dependent process. For any measurable function
$G:\RR\rightarrow\RR$ we define the subordinated process $(Y_j)_{j\geq1}$ by
\begin{equation*}
 Y_j:=G(X_j).
\end{equation*}
A useful tool to establish weak convergence of $(R_N(x,t))$ under these circumstances is the collection of Hermite polynomials. The Hermite polynomial $H_n$ of order $n$ is
defined as
\begin{equation*}
 H_n(x):=(-1)^ne^{x^2/2}\frac{d^n}{dx^n}e^{-x^2/2}.
\end{equation*}
For example $H_0(x)=1$, $H_1(x)=x$ and $H_2(x)=x^2-1$. Since $(H_n)_{n\geq0}$ is an orthogonal basis for the space of square integrable functions with respect
to the standard normal distribution, we have for any $x\in\RR$ the series expansion
\begin{equation}\label{HD}
 1_{\{Y_j\leq x\}}-F(x)=\sum_{q=0}^{\infty}\frac{J_q(x)}{q!}H_q(X_j).
\end{equation}
As usual, the Hermite coefficients $J_q(x)$ are given by the inner product, i.e.
\begin{equation*}
 J_q(x)=E(1_{\{Y_j\leq x\}}-F(x))H_q(X_j)=E1_{\{Y_j\leq x\}}H_q(X_j)=\int\limits_{\{G(s)\leq x\}}H_q(s)\varphi(s)ds,
\end{equation*}
for $q\geq1$, where $\varphi$ is the standard normal density. With regard to \eqref{HD} we call the index $m(x)$ of the first nonzero Hermite coefficient the
Hermite rank of ${1_{\{G(\cdot)\leq x\}}-F(x)}$. Since $E(1_{\{Y_j\leq x\}}-F(x))=0$ we have $m(x)\geq1$. If $0<D<1/m(x)$, then ${(1_{\{Y_j\leq
x\}}-F(x))_{j\geq1}}$ exhibits long-range dependence, see \citet{Taqqu1975}.\\
Moreover we set $m:=\min\{m(x):x\in\RR\}$ and call $m$ the Hermite rank of the class of functions $\{1_{\{G(\cdot)\leq x\}}-F(x):x\in\RR\}$. 
\begin{Satz*}[{\citealt[Theorem 1.1]{Dehling}}]
 Let $(X_j)_{j\geq 1}$ be a stationary, mean-zero Gaussian process with covariance \eqref{covariance}, let the class of functions $1_{\{G(X_j)\leq x\}}-F(x),
-\infty<x<\infty$, have Hermite rank m and let $0<D<1/m$. Then
\begin{equation*}
 \left\{d_N^{-1}R_N(x,t): -\infty\leq x\leq\infty, 0\leq t\leq 1\right\}
\end{equation*}
converges weakly in $D([-\infty,\infty]\times[0,1])$, equipped with the sup-norm, to
\begin{equation*}
 \left\{\frac{J_m(x)}{m!}Z_m(t): -\infty\leq x\leq\infty, 0\leq t\leq 1\right\}.
\end{equation*}
\end{Satz*}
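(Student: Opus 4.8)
The plan is to decompose the process along its Hermite expansion, isolate the dominant Hermite term, and treat it as a product of a deterministic function of $x$ with a univariate partial-sum process in $t$. Starting from \eqref{HD}, I would write
\begin{equation*}
 R_N(x,t)=\frac{J_m(x)}{m!}\,T_N(t)+S_N(x,t),
\end{equation*}
where $T_N(t):=\sum_{j=1}^{\lfloor Nt\rfloor}H_m(X_j)$ and $S_N(x,t):=\sum_{q>m}\frac{J_q(x)}{q!}\sum_{j=1}^{\lfloor Nt\rfloor}H_q(X_j)$, and where $d_N^2$ has the order of $N^{2-Dm}L(N)^m$, the variance of $T_N(1)$. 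The leading factor $J_m(x)/m!$ is deterministic, so the heavy probabilistic work is confined to $T_N$ and to showing that $S_N$ is uniformly negligible.

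For the first factor I would invoke the non-central limit theorem of Taqqu and of Dobrushin--Major: since $0<Dm<1$, the normalized partial-sum process $d_N^{-1}T_N(\cdot)$ converges weakly in $D[0,1]$ to the Hermite process $Z_m$. Multiplying by the deterministic bounded function $J_m(x)/m!$, which lies in $D[-\infty,\infty]$ and vanishes as $x\to\pm\infty$, then yields convergence of the finite-dimensional distributions of $d_N^{-1}\frac{J_m(x)}{m!}T_N(t)$ to those of $\frac{J_m(x)}{m!}Z_m(t)$, and jointly the weak convergence of the leading term in $D([-\infty,\infty]\times[0,1])$.

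The core of the argument is a uniform reduction principle, namely $\sup_{x,t}\bigl|S_N(x,t)\bigr|=o_P(d_N)$. For a fixed pair $(x,t)$ this follows from the orthogonality relation $E[H_p(X_i)H_q(X_j)]=\delta_{pq}\,q!\,r(i-j)^q$, which gives
\begin{equation*}
 E\bigl[S_N(x,t)^2\bigr]=\sum_{q>m}\frac{J_q(x)^2}{q!}\sum_{i,j\leq\lfloor Nt\rfloor}r(i-j)^q\leq C\sum_{q>m}\frac{J_q(x)^2}{q!}\,N^{\max(1,\,2-Dq)}=o(d_N^2),
\end{equation*}
up to slowly varying factors, because $\sum_{q\geq1}J_q(x)^2/q!=F(x)(1-F(x))\leq 1/4$ is bounded while every $q>m$ contributes a strictly smaller power of $N$ than $N^{2-Dm}$. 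The hard part is upgrading this pointwise bound to one that is uniform in both parameters simultaneously, under long-range dependence where no martingale or mixing structure is available. In the $t$-direction I would replace the unavailable Doob inequality by a maximal inequality obtained from the variance bounds via a dyadic chaining over $[0,1]$; in the $x$-direction I would exploit the monotonicity of $x\mapsto 1_{\{Y_j\leq x\}}$ to discretize $\RR$ along a grid $x_0<\cdots<x_L$ of mesh shrinking with $N$, controlling the oscillation between consecutive grid points by the increments $F(x_{l+1})-F(x_l)$. Assembling these one-parameter estimates into a two-parameter increment bound of Bickel--Wichura type delivers both the uniform negligibility of $S_N$ and the tightness of $d_N^{-1}R_N$.

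Finally I would combine the pieces: the uniform reduction shows $d_N^{-1}R_N(x,t)=\frac{J_m(x)}{m!}d_N^{-1}T_N(t)+o_P(1)$ uniformly, so the weak limit of $d_N^{-1}R_N$ coincides with that of its leading term, namely $\frac{J_m(x)}{m!}Z_m(t)$. I expect the two-parameter tightness, that is, the simultaneous uniform control of the higher Hermite terms in $x$ and in $t$, to be the main obstacle.
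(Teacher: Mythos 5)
Your proposal is correct and follows essentially the same route as the paper's machinery (which is Dehling and Taqqu's original argument, here adapted to the weighted setting): Hermite decomposition isolating the $J_m(x)H_m(X_j)/m!$ term, a uniform reduction principle obtained from second-moment bounds via chaining over a grid in $x$ and a dyadic decomposition in $t$, combined with Taqqu's non-central limit theorem for $d_N^{-1}\sum_{j\leq\lfloor Nt\rfloor}H_m(X_j)$. The one detail worth flagging is that the oscillation between consecutive grid points must be controlled by increments of $\Lambda(x)=F(x)+\int 1_{\{G(s)\leq x\}}\frac{|H_m(s)|}{m!}\varphi(s)\,ds$ rather than of $F$ alone, since besides the monotone indicator one must also sandwich the non-monotone term $\frac{J_m(x)}{m!}H_m(X_j)$.
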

The normalization factor $d_N$ is asymptotically proportional to $\sqrt{N^{2-mD}L^m(N)}$, more precisely
\begin{equation*}
 d_N^2=\operatorname{Var}\left(\sum_{j=1}^N H_m(X_j)\right),
\end{equation*}
see \citet[Corollary 4.1]{Taqqu1975}. The process $(Z_m(t))_{t\in[0,1]}$ is called an $m$th order Hermite process. It can be represented as a
multiple Wiener-It\^o integral as well as a Wiener-It\^o-Dobrushin integral, see \citet{Taqqu1979}. For $m=1$ it is a fractional Brownian motion and therefore
Gaussian, but it is non Gaussian for $m\geq2$.

Heuristically, we have to control $w(x)F(x)$ and $w(x)(1-F(x))$ for $x\rightarrow-\infty$ resp. $x\rightarrow\infty$ to get a weighted version of Theorem A.
Therefore we require that $F$ has at least a finite $\delta$-th moment, i.e. 
\begin{equation}\label{delta}
 \int|x|^{\delta}dF(x)<\infty
\end{equation}
for some $\delta>0$.
\begin{Satz}\label{nclt}
 Let $(X_j)_{j\geq 1}$ be a stationary, mean-zero Gaussian process with covariance \eqref{covariance}, let the class of functions $1_{\{G(X_j)\leq x\}}-F(x),
-\infty<x<\infty$, have Hermite rank m and let $0<D<1/m$. If $F$ has a finite $\delta$-th moment then
\begin{equation*}
 \left\{d_N^{-1}R_N(x,t): -\infty\leq x\leq\infty, 0\leq t\leq 1\right\}
\end{equation*}
converges weakly in $D_w([-\infty,\infty]\times [0,1])$, equipped with the weighted sup-norm $\|\cdot\|_w$, to
\begin{equation*}
 \left\{\frac{J_m(x)}{m!}Z_m(t): -\infty\leq x\leq\infty, 0\leq t\leq 1\right\},
\end{equation*}
where $w(x)=(1+|x|)^{\lambda}$ and $\lambda=\delta/3$.
\end{Satz}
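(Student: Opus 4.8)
The plan is to derive the weighted statement from Theorem A by adjoining a uniform control of the tails in $x$. I would first invoke the standard principle that weak convergence in $(D_w,\|\cdot\|_w)$ follows from three ingredients: weak convergence in the unweighted space $(D,\|\cdot\|_\infty)$, membership of the limit in $D_w$, and the tail-tightness condition, henceforth $(\star)$:
\[
\lim_{M\to\infty}\limsup_{N\to\infty}P\Big(\sup_{|x|\ge M,\,t\in[0,1]}w(x)\,d_N^{-1}|R_N(x,t)|>\varepsilon\Big)=0\qquad(\forall\varepsilon>0).
\]
The first ingredient is exactly Theorem A. On any compact range $|x|\le M$ the weight is bounded, so the equicontinuity supplied by the tightness in Theorem A transfers verbatim to the weighted norm; the fidis match those of the stated limit. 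Hence the heart of the argument is $(\star)$, together with the verification that $\tfrac{J_m}{m!}Z_m$ genuinely lies in $D_w$.

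Both that membership and the eventual tail bound hinge on the decay of the Hermite coefficients. For $x>0$ one has $J_q(x)=-\int_{\{G(s)>x\}}H_q(s)\varphi(s)\,ds$, whence Cauchy--Schwarz and $\int H_q^2\varphi=q!$ give $|J_q(x)|\le\sqrt{q!}\,(1-F(x))^{1/2}$, and symmetrically $|J_q(x)|\le\sqrt{q!}\,F(x)^{1/2}$ for $x<0$. The moment condition \eqref{delta} together with Markov's inequality yields $1-F(x)\le C|x|^{-\delta}$ and $F(x)\le C|x|^{-\delta}$ for large $|x|$, so that
\[
w(x)\,\frac{|J_m(x)|}{m!}\le C\,(1+|x|)^{\lambda-\delta/2}=C\,(1+|x|)^{-\delta/6}\longrightarrow0\qquad(|x|\to\infty).
\]
Since $\sup_t|Z_m(t)|<\infty$ almost surely, this shows $\tfrac{J_m}{m!}Z_m\in D_w$ with negligible weighted tail, settling the second ingredient.

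For $(\star)$ I would begin with a second-moment estimate. Using the orthogonality relation $E[H_p(X_i)H_q(X_j)]=\delta_{pq}\,q!\,r(|i-j|)^q$, the bound $|r(k)|\le1$ — which gives $\sum_{i,j\le N}|r(|i-j|)|^q\le\sum_{i,j\le N}|r(|i-j|)|^m\le C\,d_N^2$ uniformly in $q\ge m$ — and Parseval's identity $\sum_{q\ge m}J_q(x)^2/q!=F(x)(1-F(x))$, one obtains
\[
d_N^{-2}\,E\big[R_N(x,1)^2\big]=\sum_{q\ge m}\frac{J_q(x)^2}{q!}\,\frac{\sum_{i,j\le N}r(|i-j|)^q}{d_N^2}\le C\,F(x)(1-F(x)).
\]
Consequently $w(x)^2\,d_N^{-2}E[R_N(x,1)^2]\le C(1+|x|)^{2\lambda-\delta}=C(1+|x|)^{-\delta/3}$, which is summable over dyadic blocks $|x|\in[2^k,2^{k+1})$ in the tail. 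The choice $\lambda=\delta/3$ is made precisely so that the weighted second moment still decays — one only needs $2\lambda<\delta$ — while leaving a strictly positive margin in the decay exponent to absorb the overhead of the maximal inequality in the next step.

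The main obstacle is to upgrade this pointwise-in-$x$, fixed $t=1$ bound to a bound on the supremum over the whole tail $|x|\ge M$ and over $t\in[0,1]$, uniformly in $N$, for partial sums of a long-range dependent and, for $m\ge2$, non-Gaussian sequence. I would decompose the tail into the dyadic blocks above, exploit the monotonicity of $x\mapsto1_{\{Y_j\le x\}}$ to reduce the supremum over $x$ within a block to its endpoints plus a controllable oscillation (as in classical empirical-process chaining), and handle the supremum over $t$ by a M\'oricz-type maximal inequality for the partial sums, whose variance is governed by the same quantity $F(x)(1-F(x))$. Summing the resulting block estimates against the summable sequence $(1+2^k)^{-\delta/3}$ then yields $(\star)$, the margin created by $\lambda=\delta/3$ compensating the power and logarithmic factors introduced by the maximal inequality. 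Combining $(\star)$ with Theorem A and the membership of the limit in $D_w$ establishes the weak convergence of $d_N^{-1}R_N$ in $(D_w,\|\cdot\|_w)$.
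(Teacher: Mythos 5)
Your overall architecture --- Theorem A on compacts (where $w$ is bounded), membership of $\frac{J_m}{m!}Z_m$ in $D_w$ via the decay $w(x)|J_m(x)|\le C(1+|x|)^{\lambda-\delta/2}\to 0$, plus a tail-negligibility condition $(\star)$ --- is a legitimate route and genuinely different from the paper's, which instead proves a \emph{global} weighted reduction principle (Theorem \ref{weak-reduction}) and then transfers Taqqu's $D[0,1]$ convergence of $d_N^{-1}\sum_{j\le \lfloor Nt\rfloor}H_m(X_j)$ through an a.s.\ representation and Lemma \ref{beschraenktlemma}. Your second-moment bound $d_N^{-2}E[R_N(x,1)^2]\le CF(x)(1-F(x))$ and the $J_q$ estimates are correct. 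The problem is that $(\star)$, which you correctly identify as the heart of the matter, does not follow from the sketch you give, and the reason is precisely the step you leave to ``classical empirical-process chaining.''

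Concretely: the monotonicity sandwich $R_N(x,t)\le R_N(x_{i+1},t)+\lfloor Nt\rfloor F(x_i,x_{i+1})$ forces a deterministic oscillation term $w(x_{i+1})\,d_N^{-1}\lfloor Nt\rfloor F(x_i,x_{i+1})$, and since $d_N^{-1}N\sim N^{mD/2}L^{-m/2}(N)\to\infty$, no partition that is fixed in $N$ (dyadic blocks in $x$, say) can control it. One is forced to refine the partition down to mesh $\approx \varepsilon d_N/N$ in the $w\Lambda$-metric, i.e.\ to run a multiscale chain with $K\sim\log_2(Nd_N^{-1})$ levels. With only your $N$-free variance bound $CF(x,y)(1-F(x,y))$, each level of the chain contributes an amount of order $\varepsilon_k^{-2}h(M)$ with $h(M)\to 0$ but \emph{no decay in $N$}, and no allocation of the thresholds $\varepsilon_k$ summing to $\varepsilon$ can prevent the union bound over $K$ levels from growing at least like $\log^3 N$. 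Hence for fixed $M$ the $\limsup_{N\to\infty}$ in $(\star)$ is infinite and the margin $(1+M)^{-\delta/3}$ cannot rescue it. This is exactly why the paper first subtracts $\frac{J_m(x,y)}{m!}\sum_{j\le n}H_m(X_j)$: Lemma \ref{momente} then gives the increment variance an extra factor $N^{-\gamma}$, which absorbs the $(K+3)^5$ overhead of the chain; the subtracted Hermite term is handled separately using $\sup_x w(x)|J_m(x)|<\infty$. Your proposal never performs this reduction in the tail estimate, so the key mechanism is missing. A smaller point: your explanation of $\lambda=\delta/3$ (``one only needs $2\lambda<\delta$'' plus margin) is not the actual constraint --- the exponent $3$ comes from the cube $w(x_i(0))^3$ appearing in the summability condition \eqref{beschraenkt1} for the coarsest grid, which is where $3\lambda\le\delta$ is really used.
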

If we want to use Theorem \ref{nclt} to apply the MFDM, we need $\lambda>1$, i.e. the distribution function $F$ must have a finite $\delta$-th moment with $\delta>3$. We
conjecture that the choice $\lambda=\delta/3$ could be improved to $\delta/2$, since $\lambda=\delta/3$ is only necessary to get \eqref{beschraenkt1} and the rest of the proof works for $\lambda=\delta/2$.\\
To prove Theorem \ref{nclt} we  need a weighted version of Taqqu's weak reduction principle \citep[cf.][]{Taqqu1975, Dehling}. 
\begin{Satz}\label{weak-reduction}
Under the assumptions of Theorem \ref{nclt} there exist constants $C, \kappa>0$ such that for any
$0<\varepsilon\leq1$
\begin{align}
 P\Biggl(&\max_{n\leq N}\sup_{-\infty\leq x\leq\infty}d_N^{-1}\left|w(x)\sum_{j=1}^n\left(1_{\{Y_j\leq x\}}-F(x)-\frac{J_m(x)}{m!}H_m(X_j)\right)\right|
         >\varepsilon\Biggr)\notag\\
\leq &CN^{-\kappa}(1+\varepsilon^{-3}),\label{reduction}
\end{align}
where $w(x)=(1+|x|)^{\lambda}$ and $\lambda=\delta/3$.
\end{Satz}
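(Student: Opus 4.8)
\section*{Proof proposal for Theorem \ref{weak-reduction}}

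The plan is to expand the reduced sum into Hermite polynomials and exploit the exact orthogonality of the chaos together with a Parseval identity that couples the weight to the $\delta$-th moment of $F$. Writing
\[
T_n(x):=\sum_{j=1}^n\Bigl(1_{\{Y_j\leq x\}}-F(x)-\tfrac{J_m(x)}{m!}H_m(X_j)\Bigr)=\sum_{q\geq m+1}\frac{J_q(x)}{q!}\sum_{j=1}^nH_q(X_j),
\]
the cross terms vanish because $E[H_p(X_i)H_q(X_j)]=\delta_{pq}\,q!\,r(i-j)^q$, so that with $\tilde v_{m+1}(n):=\sum_{|k|<n}(n-|k|)|r(k)|^{m+1}$ and using $|r(k)|\leq1$, $q\geq m+1$,
\[
E[T_n(x)^2]=\sum_{q\geq m+1}\frac{J_q(x)^2}{q!}\sum_{|k|<n}(n-|k|)\,r(k)^q\leq \tilde v_{m+1}(n)\sum_{q\geq m}\frac{J_q(x)^2}{q!}=\tilde v_{m+1}(n)\,F(x)\bigl(1-F(x)\bigr),
\]
where the last equality is Parseval, $\sum_{q\geq m}J_q(x)^2/q!=E[(1_{\{Y_1\leq x\}}-F(x))^2]=F(x)(1-F(x))$. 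This is the structural heart of the argument: the reduced process has Hermite rank $\geq m+1$, and by Taqqu's variance asymptotics $\tilde v_{m+1}(N)/d_N^2=O(N^{-D}L(N))$ (and is even smaller when $(m+1)D\geq1$), which is the source of the polynomial gain $N^{-\kappa}$.

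I would then feed the weight through this bound using the moment assumption \eqref{delta}. By Markov's inequality $F(x)\leq C|x|^{-\delta}$ as $x\to-\infty$ and $1-F(x)\leq C|x|^{-\delta}$ as $x\to+\infty$, hence
\[
w(x)^2F(x)\bigl(1-F(x)\bigr)\leq C(1+|x|)^{2\lambda}\min\bigl(F(x),1-F(x)\bigr)\leq C(1+|x|)^{2\lambda-\delta},
\]
which is uniformly bounded, and decaying, as soon as $2\lambda\leq\delta$. Combined with the first step this already shows that $\sup_x w(x)^2 d_N^{-2}E[T_n(x)^2]$ is polynomially small in $N$, i.e.\ the process is controlled at a fixed $x$ and fixed $n$, including in the two unbounded tails.

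The upgrade to $\max_{n\leq N}$ and $\sup_x$ proceeds in two stages. For the maximum over $n$ I would apply a Móricz/Serfling-type maximal inequality, using that the increments $T_b(x)-T_a(x)$ obey the same Parseval/variance bound; this costs only logarithmic factors, absorbed into $N^{-\kappa}$ since the variance ratio leaves a genuine polynomial gain. For the supremum over $x$ I would discretize $\RR$ by a grid $\{x_i\}$ adapted to $F$, bound the maximum over grid points by a union bound plus a moment inequality, and control the oscillation inside each cell by the monotonicity of $\sum_j 1_{\{Y_j\leq x\}}$ in $x$ (the contribution of $J_m(x)H_m(X_j)/m!$ being handled through the modulus of $J_m$). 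Crucially, for the grid/oscillation estimate \eqref{beschraenkt1} I would use the \emph{third} absolute moment and Markov's inequality with exponent $3$: this is what produces the factor $\varepsilon^{-3}$, and summing $w(x_i)^3$ against the increments of $F$ brings in the integral $\int w(x)^3\,dF(x)=\int(1+|x|)^{3\lambda}\,dF(x)$, finite precisely because $3\lambda=\delta$. A purely second-moment version of this step would instead require only $2\lambda\leq\delta$, which is the basis for the conjecture that $\lambda=\delta/3$ can be relaxed to $\delta/2$.

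The main obstacle is exactly this last oscillation/boundedness step \eqref{beschraenkt1}: one must simultaneously balance the grid spacing, the polynomial growth of $w$, and the moment decay of $F$ in both tails, while keeping the $\varepsilon$-dependence explicit and the (polynomially growing) grid cardinality compatible with the polynomial gain from the variance ratio. Everything upstream—the orthogonality/Parseval estimate, the weight bound, and the rate—already works for $\lambda=\delta/2$; it is only the cubic budget used to tame the weighted tail oscillation that costs the extra factor and caps the weight at $\lambda=\delta/3$.
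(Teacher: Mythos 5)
Your overall architecture matches the paper's: the orthogonality/Parseval bound $E[T_n(x)^2]\leq \tilde v_{m+1}(n)F(x)(1-F(x))$ is exactly Lemma \ref{momente} (stated there for increments $S_N(n,x,y)$), the supremum over $x$ is handled by a chaining/discretization adapted to $F$ and $w$ with the oscillation in each cell controlled by monotonicity of the empirical distribution function (Lemma \ref{lemma3}), and the maximum over $n$ is obtained by a dyadic decomposition of $n$ using stationarity. You also correctly locate the bottleneck at \eqref{beschraenkt1} and correctly observe that the finiteness of $\int w^3\,dF$ is what forces $\lambda=\delta/3$.

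However, your mechanism for the step that produces $\varepsilon^{-3}$ and the exponent $3\lambda=\delta$ is not the paper's and, as described, does not work. In the paper \emph{everything is done with second moments}: Chebyshev gives $\varepsilon^{-2}$, and the extra factor $\varepsilon^{-1}$ comes from the chaining depth $K\sim\log_2(Nd_N^{-1}\varepsilon^{-1})$, via $(K+3)^5\leq C\varepsilon^{-1}N^{\delta'}$; the truncation at level $K$ is also what controls the residual term $2^{-K}d_N^{-1}\left|\sum_{j\leq n}H_m(X_j)\right|$ in \eqref{abschaetzung8}, which you do not address. The cube of $w$ in \eqref{beschraenkt1} does not come from a third moment either: the union bound over the level-$0$ grid produces $\sum_j w(x_{j+1}(0))^2(1-F(x_j(0)))$, and after interchanging the order of summation one picks up a factor $(i+1)\leq\Lambda(\infty)w(x_{i+1}(0))$ from the grid geometry (the grid is defined by $w(x)\Lambda(x)\geq\Lambda(0)+i2^{-k}$ with $\Lambda$ bounded, so the index $i$ is at most of order $w(x_i)$), which upgrades $w^2$ to $w^3$. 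Your proposed alternative — Markov with exponent $3$ applied to $E|T_n(x)|^3$ — requires a third-moment analogue of Lemma \ref{momente} for the infinite-chaos remainder, which is neither established nor routine (hypercontractivity applies to a fixed finite chaos, not uniformly to $\sum_{q\geq m+1}$), and even granting such a bound the anchor sum $\sum_j w(x_j)^3(1-F(x_j))$ would, by the same grid-cardinality accounting, demand $\lambda\leq\delta/4$, i.e.\ a \emph{worse} weight. For the same reason your claim that a "purely second-moment version" would need only $2\lambda\leq\delta$ is incorrect: the paper's argument is purely second-moment and still needs $3\lambda\leq\delta$, precisely because of the grid-index factor; the possible improvement to $\delta/2$ is only a conjecture in the paper, not a consequence of using second moments.
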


\section{Proofs}
From now on we assume that the conditions of Theorem \ref{nclt} are satisfied. Especially let $w(x)=(1+|x|)^{\lambda}$ with $\lambda=\delta/3$. For consistency
reasons we adopt some notations by Dehling and Taqqu, namely
\begin{align*}
\Lambda(x)&:=F(x)+\int 1_{\{G(s)\leq x\}}\frac{|H_m(s)|}{m!}\varphi(s)ds, \\
 S_N(n,x)&:=d_N^{-1}\sum_{j=1}^n\left(1_{\{Y_j\leq x\}}-F(x)-\frac{J_m(x)}{m!}H_m(X_j)\right).
\end{align*}
Furthermore for $x\leq y$ we set 
\begin{align*}
F(x,y):&=F(y)-F(x),\\
J_m(x,y):&=J_m(y)-J_m(x)\\
S_N(n,x,y):&=S_N(n,y)-S_N(n,x)\\
\Lambda(x,y):&=\Lambda(y)-\Lambda(x).
\end{align*} 
Note that $\Lambda$ is nondecreasing and that $\Lambda(x,y)$ bounds $F(x,y)$ as well as $(1/m!)J_m(x,y)$ if $x\leq y$.

Lemma \ref{momente} is a modification of Lemma 3.1 by Dehling and Taqqu. The following rearrangement is small but necessary. 
\begin{Lemma}\label{momente}
 Under the assumptions of Theorem \ref{nclt} there exist constants $\gamma>0$ and $C$ such that for $n\leq N$,
\begin{equation}\label{momenteungl}
 E\left|S_N(n,x,y)\right|^2\leq C\left(\frac{n}{N}\right)N^{-\gamma}F(x,y)\left(1-F(x,y)\right).
\end{equation}
\end{Lemma}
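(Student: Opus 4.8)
The plan is to use the Hermite expansion \eqref{HD} to rewrite $S_N(n,x,y)$ purely in terms of Hermite polynomials of order at least $m+1$, and then to compute the second moment via the orthogonality of the $H_q$. Applied to the increment over $(x,y]$, the expansion reads $1_{\{x<Y_j\le y\}}-F(x,y)=\sum_{q\ge m}\frac{J_q(x,y)}{q!}H_q(X_j)$, so that subtracting the leading term $\frac{J_m(x,y)}{m!}H_m(X_j)$ leaves precisely the tail, and
\begin{equation*}
 S_N(n,x,y)=d_N^{-1}\sum_{j=1}^n\sum_{q=m+1}^{\infty}\frac{J_q(x,y)}{q!}H_q(X_j).
\end{equation*}
Using $E\left[H_p(X_i)H_q(X_j)\right]=\delta_{pq}\,q!\,r(i-j)^q$, the second moment collapses to a single sum over $q$,
\begin{equation*}
 E\left|S_N(n,x,y)\right|^2=d_N^{-2}\sum_{q=m+1}^{\infty}\frac{J_q(x,y)^2}{q!}\sum_{i,j=1}^n r(i-j)^q.
\end{equation*}

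Next I would decouple the two sums. Since $r(0)=1$ and $|r(k)|\le1$ for every $k$, one has $r(i-j)^q\le|r(i-j)|^q\le|r(i-j)|^{m+1}$ for all $q\ge m+1$, so the inner double sum is bounded, uniformly in $q$, by $B(n):=\sum_{i,j=1}^n|r(i-j)|^{m+1}$. For the coefficient sum I would invoke Parseval's identity: because $\sum_{q\ge m}\frac{J_q(x,y)^2}{q!}=E\left(1_{\{x<Y_1\le y\}}-F(x,y)\right)^2=F(x,y)\left(1-F(x,y)\right)$, the tail obeys $\sum_{q=m+1}^{\infty}\frac{J_q(x,y)^2}{q!}\le F(x,y)\left(1-F(x,y)\right)$. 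This exact identification of the coefficient sum with the variance of the increment indicator is the rearrangement announced before the statement; it delivers the factor $F(x,y)(1-F(x,y))$ directly. Combining the two estimates gives
\begin{equation*}
 E\left|S_N(n,x,y)\right|^2\le d_N^{-2}B(n)\,F(x,y)\left(1-F(x,y)\right),
\end{equation*}
so the whole problem reduces to showing $d_N^{-2}B(n)\le C(n/N)N^{-\gamma}$ for some $\gamma>0$.

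The remaining step is a regular-variation comparison of $B(n)$ with $d_N^2$. Writing $B(n)=\sum_{|k|<n}(n-|k|)|r(k)|^{m+1}$ and using \eqref{covariance}, I would split into the case $(m+1)D<1$, where $B(n)\le Cn^{2-(m+1)D}L(n)^{m+1}$, and the case $(m+1)D\ge1$, where $|r(\cdot)|^{m+1}$ is summable (up to a slowly varying correction when $(m+1)D=1$) and $B(n)\le Cn\,\tilde L(n)$. Since $d_N^2$ is of order $N^{2-mD}L(N)^m$, in the first case $\frac{N}{n}d_N^{-2}B(n)$ is of order $\frac{n^{1-(m+1)D}L(n)^{m+1}}{N^{1-mD}L(N)^m}$; as $1-(m+1)D>0$ the numerator is regularly varying with positive index and is therefore maximized, up to a constant, at $n=N$, yielding a bound of order $N^{-D}L(N)$. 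In the second case one obtains a bound of order $N^{mD-1}$ times a slowly varying factor. Because $0<D<1/m$ forces $D>0$ and $1-mD>0$, any $\gamma<\min(D,1-mD)$ works, the slowly varying functions being absorbed by shrinking $\gamma$ slightly through Potter bounds.

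I expect the main obstacle to lie in this last paragraph: securing the estimate \emph{uniformly in $n\le N$}, so that the correct $n/N$ factor survives rather than only the value at $n=N$, and treating the three ranges of $(m+1)D$ together with the slowly varying function $L$ in one clean bound. The decoupling via $|r(k)|\le1$ and the Parseval identity are short; the quantitative comparison of the two regularly varying quantities $B(n)$ and $d_N^2$ is where the care is needed.
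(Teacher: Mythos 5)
Your proposal follows essentially the same route as the paper: expand the increment indicator in Hermite polynomials, use Parseval to identify the tail coefficient sum with $F(x,y)(1-F(x,y))$, bound the covariance double sum by $\sum_{j,k\le n}|r(j-k)|^{m+1}$ via $|r|\le 1$, and then compare this regularly varying quantity with $d_N^2$ in the three cases $D(m+1)<1$, $=1$, $>1$ to extract the $(n/N)N^{-\gamma}$ factor. The argument is correct and matches the paper's proof step for step.
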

We can bound \eqref{momenteungl} again by $C(n/N)N^{-\gamma}(1-F(y))$, or $C(n/N)N^{-\gamma}F(x)$, which is useful for $y\rightarrow\infty$ resp.
$x\rightarrow-\infty$. During this paper we will handle $C$ as a universal constant, possibly growing from line to line and from lemma to lemma, but at the end
bounded and independent of $N,n,x$ and $\varepsilon$.
\begin{proof}
The Hermite expansion 
\begin{equation*}
\sum_{q=m}^{\infty}\frac{J_q(x,y)}{q!}H_q(X_j)=1_{\{x\leq Y_j\leq y\}}-F(x,y) 
\end{equation*}
yields
\begin{equation*}
 \sum_{q=m}^{\infty}\frac{J_q^2(x,y)}{q!}=E\left(1_{\{x\leq Y_j\leq y\}}-F(x,y)\right)^2=F(x,y)\left(1-F(x,y)\right).
\end{equation*}
Together with $EH_q(X_j)H_q(X_k)=q!(EX_jX_k)^q=q!(r(j-k))^q$ we get
\begin{align*}
 &E\left(\sum_{j\leq n}\left(1_{\{x\leq Y_j\leq y\}}-F(x,y)-\frac{J_m(x,y)}{m!}H_m(X_j)\right)\right)^2\\
=&E\left(\sum_{j\leq n}\sum_{q=m+1}^{\infty}\frac{J_q(x,y)}{q!}H_q(X_j)\right)^2\\
=&\sum_{q=m+1}^{\infty}\frac{J_q^2(x,y)}{q!}\frac{1}{q!}\sum_{j,k\leq n}EH_q(X_j)H_q(X_k)\\
\leq&F(x,y)(1-F(x,y))\sum_{j,k\leq n}\left|r(j-k)\right|^{m+1}.
\end{align*}
Since $\sum_{j,k\leq n}|r(j-k)|^{m+1}\leq 2n\sum_{k=1}^n k^{-D(m+1)}|L(k)|^{m+1}$, we have
\begin{align*}
 \sum_{j,k\leq n}|r(j-k)|^{m+1}&\leq Cn^{2-D(m+1)}|L(n)|^{m+1},~~~\textnormal{ for $D(m+1)<1$,}\\
 \sum_{j,k\leq n}|r(j-k)|^{m+1}&\leq Cn,~~~\textnormal{ for $D(m+1)>1$,}\\
 \sum_{j,k\leq n}|r(j-k)|^{m+1}&\leq Cn^{1+\alpha}|L(n)|^{m},~~~\textnormal{ for $D(m+1)=1$}
\end{align*}
and $0<\alpha<1-mD$. In general we get 
\begin{equation*}
 \sum_{j,k\leq n}|r(j-k)|^{m+1}\leq Cn^{1+\alpha\vee2-D(m+1)}L'(n),
\end{equation*}
where $L'$ is some suitable slowly varying function. Therefore
\begin{align*}
 E|S_N(n,x,y)|^2&\leq Cd_N^{-2}F(x,y)(1-F(x,y))n^{1+\alpha\vee2-D(m+1)}L'(n)\\
&\leq CF(x,y)(1-F(x,y))n^{1+\alpha\vee2-D(m+1)}N^{mD-2}L'(n)\left(L(N)\right)^{-m}\\
&= CF(x,y)(1-F(x,y))\left(\frac{n}{N}\right)^{1+\alpha\vee2-D(m+1)}N^{mD+\alpha-1\vee -D}L'(n)\left(L(N)\right)^{-m}\\
&\leq CF(x,y)(1-F(x,y))\left(\frac{n}{N}\right)N^{-\gamma}.
\end{align*}
\end{proof}
\begin{Lemma}\label{lemma3}
Under the assumptions of Theorem \ref{nclt} there exist constants $\rho>0$ and $C$ such that for
any $n\leq N$ and $0<\varepsilon\leq 1$,
\begin{equation*}
 P\left(\sup_{x\in\RR}|w(x)S_N(n,x)|>\varepsilon\right)\leq CN^{-\rho}\left(\frac{n}{N}\varepsilon^{-3}+\left(\frac{n}{N}\right)^{2-mD}\right),
\end{equation*}
where $w(x)=(1+|x|)^{\lambda}$ and $\lambda=\delta/3$.
\end{Lemma}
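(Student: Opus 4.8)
The plan is to discretise the supremum over $x\in\RR$ by a deterministic grid and to control the oscillation of $S_N(n,\cdot)$ between consecutive grid points. I would choose a partition $-\infty=x_0<x_1<\dots<x_p=\infty$ adapted simultaneously to $\Lambda$ and to the weight, with $\bar w_i\,\Lambda(x_{i-1},x_i)\le\eta$ for every $i$, where $\bar w_i:=\max(w(x_{i-1}),w(x_i))$ and the fineness $\eta$ is fixed later. For $x\in[x_{i-1},x_i]$ I would write $S_N(n,x)=S_N(n,x_{i-1})+S_N(n,x_{i-1},x)$ and, using the monotonicity of $x\mapsto 1_{\{Y_j\le x\}}$ and of $F$ together with $|J_m(x,y)|/m!\le\Lambda(x,y)$, prove the oscillation estimate
\begin{equation*}
\sup_{x\in[x_{i-1},x_i]}\bigl|S_N(n,x_{i-1},x)\bigr|\le\bigl|S_N(n,x_{i-1},x_i)\bigr|+d_N^{-1}n\,\Lambda(x_{i-1},x_i)+2d_N^{-1}\Lambda(x_{i-1},x_i)\Bigl|\sum_{j\le n}H_m(X_j)\Bigr|.
\end{equation*}
Since $w(x)\le\bar w_i$ on $[x_{i-1},x_i]$, this reduces $\sup_x w(x)|S_N(n,x)|$ to four quantities: $\max_i\bar w_i|S_N(n,x_{i-1})|$ (grid values), $\max_i\bar w_i|S_N(n,x_{i-1},x_i)|$ (grid increments), $\max_i\bar w_i d_N^{-1}n\,\Lambda(x_{i-1},x_i)$ (deterministic drift) and $2d_N^{-1}|\sum_{j\le n}H_m(X_j)|\max_i\bar w_i\Lambda(x_{i-1},x_i)$ (Hermite fluctuation).

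I would then set $\eta\asymp\varepsilon d_N/N$, \emph{uniformly} in $n\le N$, so that the deterministic drift is at most $d_N^{-1}n\,\eta\le d_N^{-1}N\eta\asymp\varepsilon$ and, after adjusting constants, can be discarded; the grid then has $p\asymp N/(\varepsilon d_N)$ points, with density proportional to $w\,d\Lambda/\eta$. For the grid values and increments I would apply Markov's inequality together with Lemma \ref{momente} (and the one-sided bounds $E|S_N(n,x)|^2\le C(n/N)N^{-\gamma}\min(F(x),1-F(x))$ noted thereafter) and sum over the grid. The grid-value term dominates and is of order $\varepsilon^{-2}\eta^{-1}(n/N)N^{-\gamma}\asymp\varepsilon^{-3}(n/N)(N/d_N)N^{-\gamma}$, which yields $(n/N)\varepsilon^{-3}N^{-\rho}$ (the extra $\varepsilon^{-1}$ coming from the number of grid points), while the increment term is of the smaller order $\varepsilon^{-2}$. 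For the Hermite fluctuation I would note $\max_i\bar w_i\Lambda(x_{i-1},x_i)\le\eta$, so Markov's inequality and $E(\sum_{j\le n}H_m(X_j))^2=d_n^2$ give a contribution of order $d_n^2/N^2\asymp(n/N)^{2-mD}N^{-mD}$, the second term of the claim; here it is essential that $\eta$ was calibrated with $N$ rather than with $n$, since otherwise this term would blow up for small $n$.

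The hardest part is twofold. First, the oscillation estimate itself: the reduction to grid increments hinges on absorbing the non-monotone $F$- and $J_m$-parts into the deterministic drift and the Hermite fluctuation via the monotonicity of the empirical counting function, and on checking that the weight is essentially constant on each interval. Second, and more delicate, one must ensure that the weighted sums over the grid converge uniformly in $N$: because the point density carries an extra factor $w$, the grid-value sum requires $\int w^3\min(F,1-F)\,d\Lambda<\infty$, and it is precisely here that the moment assumption \eqref{delta} and the choice $\lambda=\delta/3$, which makes $w^3(x)\asymp(1+|x|)^{\delta}$, are needed to tame the tails. Calibrating the single parameter $\eta$ so that the drift stays below $\varepsilon$ while neither the empirical nor the Hermite contribution is inflated is what welds the two error terms $(n/N)\varepsilon^{-3}$ and $(n/N)^{2-mD}$ together, with $\rho=\min(\gamma-mD/2,\,mD)>0$.
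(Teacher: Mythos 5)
Your overall architecture (a partition adapted to $w$ and $\Lambda$, an oscillation estimate via monotonicity that absorbs the $F$- and $J_m$-parts into a deterministic drift and a Hermite fluctuation, and a finest mesh calibrated to $\varepsilon d_N/N$ so that the residual terms give the $(n/N)^{2-mD}$ contribution) matches the paper's. The genuine gap is in how you control the grid terms: you use a \emph{single} partition of mesh $\eta\asymp\varepsilon d_N/N$ and a union bound over its $p\asymp N/(\varepsilon d_N)$ points. As you compute yourself, this puts a factor $\eta^{-1}\asymp N/d_N\asymp N^{mD/2}L^{-m/2}(N)$ in front of $\varepsilon^{-2}(n/N)N^{-\gamma}$, so you need $\gamma>mD/2$ for your $\rho=\min(\gamma-mD/2,\,mD)$ to be positive. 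But Lemma \ref{momente} only supplies \emph{some} $\gamma>0$, and its proof shows $\gamma$ cannot in general exceed $D$ (when $D(m+1)<1$) or $1-mD$ (when $D(m+1)>1$). For instance $m=3$, $D=1/5$ gives $\gamma<1/5$ while $mD/2=3/10$, so your exponent is negative and the bound is vacuous. A single-scale union bound over a grid fine enough to kill the drift is simply too lossy here; this is precisely the situation where chaining is not optional.

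The paper avoids this by a genuine multi-level chaining: it builds $K\asymp\log_2(Nd_N^{-1}/\varepsilon)$ \emph{nested} partitions $x_i(k)=\inf\{x\geq0:w(x)\Lambda(x)\geq\Lambda(0)+i2^{-k}\}$ and telescopes $S_N(n,x)$ across the levels with thresholds $\varepsilon/(k+3)^2$. The crucial ingredient is the refinement inequality \eqref{abschaetzung} leading to \eqref{beschraenkt2}: the weighted increment sum $\sum_i w(x_i(k))^2\bigl(F(x_i(k))-F(x_{i-1}(k))\bigr)$ is bounded \emph{uniformly in $k$} by its level-$0$ value, which is finite by \eqref{delta} and $\lambda=\delta/3$. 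Hence each level costs only $C(n/N)N^{-\gamma}(k+3)^4\varepsilon^{-2}$, and summing over levels costs $(K+3)^5\leq C\varepsilon^{-1}N^{\delta'}$ for arbitrary $\delta'>0$ --- a polylogarithmic rather than polynomial price for resolving the supremum down to scale $\varepsilon d_N/N$. That is also where the $\varepsilon^{-3}$ in the statement really comes from (you obtain your $\varepsilon^{-3}$ from the cardinality of the grid instead, which is exactly the step that does not close). To repair your argument you would need to replace the single grid by the nested family and prove the uniform-in-$k$ bound \eqref{beschraenkt2}; your tail condition $\int w^3\min(F,1-F)\,d\Lambda<\infty$ would then also be weakened to the paper's $\int w^3\,dF<\infty$, which is all that \eqref{delta} guarantees.
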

\begin{proof}
As \citet[Lemma 3.2]{Dehling} we will use the classical chaining technique. For simplicity we will bound the probability separately for $x\in[0,\infty)$ and
$x\in(-\infty,0]$, starting with the first case. Since
$\lim_{x\rightarrow\infty}w(x)\Lambda(x)=\infty$, the refining partitions $(x_i(k))_{i\in\NN}$ of $[0,\infty)$ should consist of an infinite number of grid
points. For $k\geq 0$ we set
\begin{equation*}
x_i(k):=\inf\{x\geq0:w(x)\Lambda(x)\geq \Lambda(0)+i2^{-k}\}. 
\end{equation*}
By this definition we have
\begin{align}
 &w(x_{i+1}(k))\Lambda(x_i(k),x_{i+1}(k)-)\notag\\
\leq&w(x_{i+1}(k))\Lambda(x_{i+1}(k)-)-w(x_{i}(k))\Lambda(x_{i}(k))\notag\\
\leq&2^{-k}.\label{abstand}
\end{align}
Moreover, using condition \eqref{delta} together with the assumption $\delta=3\lambda$ and $i+1\leq \Lambda(\infty)w(x_{i+1}(0))$ we get
\begin{align}
 &\sum_{j=0}^{\infty}w(x_{j+1}(0))^2(1-F(x_j(0)))\notag\\
=&\sum_{j=0}^{\infty}\sum_{i=j}^{\infty}w(x_{j+1}(0))^2(F(x_{i+1}(0))-F(x_i(0)))\notag\\
=&\sum_{i=0}^{\infty}\sum_{j=0}^{i}w(x_{j+1}(0))^2(F(x_{i+1}(0))-F(x_i(0)))\notag\\
\leq&\sum_{i=0}^{\infty}(i+1)w(x_{i+1}(0))^2(F(x_{i+1}(0))-F(x_i(0)))\notag\\
\leq&\Lambda(\infty)\sum_{i=0}^{\infty}w(x_{i+1}(0))^3(F(x_{i+1}(0))-F(x_i(0)))\notag\\
\leq&C\sum_{i=0}^{\infty}w(x_{i}(0))^3(F(x_{i+1}(0))-F(x_i(0)))\notag\\
<&\infty.\label{beschraenkt1}
\end{align}
Notice that for all $k\in\NN$ $(x_j(k+1))_{j\in\NN}$ is a refinement of $(x_i(k))_{i\in\NN}$ and so for any index $i\in\NN$ it exists an index $j\in\NN$ with
$x_j(k+1)=x_i(k)$ and $x_{j-2}(k+1)=x_{i-1}(k)$. This yields 
\begin{align}
&w(x_i(k))^2(F(x_i(k))-F(x_{i-1}(k)))\notag\\
=&w(x_j(k+1))^2(F(x_j(k+1))-F(x_{j-2}(k+1)))\notag\\
=&w(x_j(k+1))^2(F(x_j(k+1))-F(x_{j-1}(k+1)))\notag\\
{}&+w(x_j(k+1))^2(F(x_{j-1}(k+1))-F(x_{j-2}(k+1)))\notag\\
\geq&w(x_j(k+1))^2(F(x_j(k+1))-F(x_{j-1}(k+1)))\notag\\
{}&+w(x_{j-1}(k+1))^2(F(x_{j-1}(k+1))-F(x_{j-2}(k+1))).\label{abschaetzung}
\end{align}
Since \eqref{abschaetzung} implies
\begin{align*}
&\sum_{i=1}^{\infty}w(x_{i}(k+1))^2(F(x_{i}(k+1))-F(x_{i-1}(k+1)))\\
\leq&\sum_{i=1}^{\infty}w(x_{i}(k))^2(F(x_{i}(k))-F(x_{i-1}(k)))
\end{align*}
and \eqref{abstand} implies
\begin{align*}
 w(x_{i+1}(k))&\leq\frac{1}{\Lambda(0)}\Lambda(x_{i+1}(k)-)w(x_{i+1}(k))\\
&\leq\frac{1}{\Lambda(0)}\left(2^{-k}+w(x_i(k))\Lambda(x_i(k))\right)\\
&\leq\frac{1}{\Lambda(0)}\left(1+w(x_i(k))\Lambda(\infty)\right)\\
&\leq Cw(x_i(k))
\end{align*}
we get
\begin{align}
    &\sum_{i=1}^{\infty}w(x_{i+1}(k+1))^2(F(x_{i+1}(k+1))-F(x_{i-1}(k+1)))\notag\\
=&\sum_{i=1}^{\infty}w(x_{i+1}(k+1))^2(F(x_{i+1}(k+1))-F(x_{i}(k+1)))\notag\\
{}&+\sum_{i=1}^{\infty}w(x_{i+1}(k+1))^2(F(x_{i}(k+1))-F(x_{i-1}(k+1)))\notag\\
\leq& C\sum_{i=1}^{\infty}w(x_{i}(k+1))^2(F(x_{i}(k+1))-F(x_{i-1}(k+1)))\notag\\
\leq& C\sum_{i=1}^{\infty}w(x_{i}(k))^2(F(x_{i}(k))-F(x_{i-1}(k)))\notag\\
\leq& C\sum_{i=1}^{\infty}w(x_{i}(0))^2(F(x_{i}(0))-F(x_{i-1}(0)))\notag\\
<&\infty,\label{beschraenkt2}
\end{align}
where \eqref{beschraenkt2} is uniform in $k$. We will use \eqref{abstand}, \eqref{beschraenkt1} and \eqref{beschraenkt2} as follows. For any $x\geq0$ and any
$k\in\{1,\ldots.K\}$ there exists an index $i_k(x)$
such that
\begin{equation*}
 x_{i_k(x)}(k)\leq x<x_{i_k(x)+1}(k).
\end{equation*}
This nesting yields a stepwise chaining of $x$, given by
\begin{equation*}
 0\leq x_{i_0(x)}(0)\leq x_{i_1(x)}(1)\leq\ldots\leq x_{i_K(x)}(K)\leq x.
\end{equation*}
Using the grid points above, we get
\begin{align}
|w(x)S_N(n,x)|\leq&|w(x)S_N(n,x_{i_0(x)}(0))|+|w(x)S_N(n,x_{i_0(x)}(0),x_{i_1(x)}(1))|\notag\\ 
                  &+\ldots+|w(x)S_N(n,x_{i_K(x)}(K),x)|\notag\\
              \leq&|w(x_{i_0(x)+1}(0))S_N(n,x_{i_0(x)}(0))|+|w(x_{i_1(x)+1}(1))S_N(n,x_{i_0(x)}(0),x_{i_1(x)}(1))|\notag\\
                  &+\ldots+|w(x)S_N(n,x_{i_K(x)}(K),x)|.\label{abschaetzung2}
\end{align}
The last term of the right hand side can be bounded as follows
\begin{align}
\left|w(x)S_N(n, x_{i_K(x)}(K),x)\right|&=d_N^{-1}\Biggl|\sum_{j\leq n}\biggl(w(x)\Bigl(1_{\{x_{i_K(x)}(K)< Y_j\leq x\}}-F(x_{i_K(x)}(K),x)\Bigr)\notag\\ 
   &{}\hspace{1,65cm} -w(x)\frac{J_m(x_{i_K(x)}(K),x)}{m!}H_m(X_j)\biggr)\Biggr|\notag\\
&\leq d_N^{-1}\sum_{j\leq n}\Bigl(w(x_{i_K(x)+1}(K))1_{\{x_{i_K(x)}(K)< Y_j< x_{i_K(x)+1}(K)\}}\notag\\
&{}\hspace{1,5cm}  +w(x_{i_K(x)+1}(K))F(x_{i_K(x)}(K),x_{i_K(x)+1}(K)-)\Bigr)\notag\\
&+w(x_{i_K(x)+1}(K))\Lambda(x_{i_K(x)}(K),x_{i_K(x)+1}(K)-)d_N^{-1}\left|\sum_{j\leq n}H_m(X_j)\right|\notag\\
&\leq\left|w(x_{i_K(x)+1}(K))S_N(n,x_{i_K(x)}(K),x_{i_K(x)+1}(K)-)\right|\notag\\
&+2nd_N^{-1}w(x_{i_K(x)+1}(K))F(x_{i_K(x)}(K),x_{i_K(x)+1}(K)-)\notag\\
&+2w(x_{i_K(x)+1}(K))\Lambda(x_{i_K(x)}(K),x_{i_K(x)+1}(K)-)d_N^{-1}\left|\sum_{j\leq n}H_m(X_j)\right|\notag\\
&\leq\left|w(x_{i_K(x)+1}(K))S_N(n,x_{i_K(x)}(K),x_{i_K(x)+1}(K)-)\right|\notag\\
&+2nd_N^{-1}2^{-K}+2d_N^{-1}2^{-K}\left|\sum_{j\leq n}H_m(X_j)\right|.\label{abschaetzung3}
\end{align} 
Because of \eqref{abschaetzung2}, \eqref{abschaetzung3} and $\sum_{k=0}^{\infty}\varepsilon/(k+3)^2\leq \varepsilon/2$ the probability 
$P(\sup|w(x)S_N(n,x)|>\varepsilon)$ is dominated by
\begin{align}
 &P\left(\max_{x>0}|w(x_{i_0(x)+1}(0))S_N(n,x_{i_0(x)}(0))|>\varepsilon/9\right)\notag\\
+&\sum_{k=1}^{K}P\left(\max_{x>0}|w(x_{i_k(x)+1}(k))S_N(n,x_{i_{k-1}(x)}(k-1),x_{i_k(x)}(k))|>\varepsilon/(k+3)^2\right)\notag\\
+&P\left(\max_{x>0}|w(x_{i_K(x)+1}(K))S_N(n,x_{i_{K}(x)}(K),x_{i_K(x)+1}(K)-)|>\varepsilon/(K+3)^2\right)\notag\\
+&P\left(2d_N^{-1}2^{-K}\left|\sum_{j\leq n}H_m(X_j)\right|>\varepsilon/2-2nd_N^{-1}2^{-K}\right).\label{abschaetzung4}
\end{align}
Using \eqref{beschraenkt1} and Lemma \ref{momente} we get
\begin{align}
  {}&P\left(\max_{x\in\RR}\left|w(x_{i_0(x)+1}(0))S_N(n,x_{i_0(x)}(0))\right|>\frac{\varepsilon}{9}\right)\notag\\
\leq&\sum_{j=0}^{\infty}P\left(\left|w(x_{j+1}(0))S_N(n,x_j(0))\right|>\frac{\varepsilon}{9}\right)\notag\\
\leq&C\left(\frac{n}{N}\right)N^{-\gamma}81\varepsilon^{-2}\sum_{j=0}^{\infty}w(x_{j+1}(0))^2(1-F(x_{j}(0)))\notag\\
\leq&C\left(\frac{n}{N}\right)N^{-\gamma}81\varepsilon^{-2}.\label{abschaetzung5}
\end{align}
For $1\leq k<K$ we get by \eqref{beschraenkt2}
\begin{align}
 {}&P\left(\max_{x>0}\left|w(x_{i_{k+1}(x)+1}(k+1))S_N(n,x_{i_k(x)}(k),x_{i_{k+1}(x)}(k+1))\right|>\frac{\varepsilon}{(k+3)^2}\right)\notag\\
\leq&\sum_{j=0}^{\infty} P\left(|w(x_{j+2}(k+1))S_N(n,x_j(k+1),x_{j+1}(k+1))|>\frac{\varepsilon}{(k+3)^2}\right)\notag\\
\leq&C\left(\frac{n}{N}\right)N^{-\gamma}(k+3)^4\varepsilon^{-2}\sum_{j=0}^{\infty}w(x_{j+2}(k+1))^2(F(x_{j+2}(k+1))-F(x_{j}(k+1)))\notag\\
\leq&C\left(\frac{n}{N}\right)N^{-\gamma}(k+3)^4\varepsilon^{-2}\label{abschaetzung6}
\end{align}
and similarly
\begin{align}
{}&P\left(\max_{x>0}\left|w(x_{i_K(x)+1}(K))S_N(n,x_{i_K(x)}(K),x_{i_K(x)+1}(K)-)\right|>\frac{\varepsilon}{(K+3)^2}\right)\notag\\
\leq&C\left(\frac{n}{N}\right)N^{-\gamma}(K+3)^4\varepsilon^{-2}.\label{abschaetzung7}
\end{align}
We choose
\begin{equation*}
 K=\left\lfloor\log_2\left(\frac{8Nd_N^{-1}}{\varepsilon}\right)\right\rfloor+1,
\end{equation*}
which implies $\varepsilon/2-2Nd_N^{-1}2^{-K}\geq\varepsilon/4$ and therefore
\begin{align}
 &P\left(2d_N^{-1}2^{-K}\left|\sum_{j\leq n}H_m(X_j)\right|>\frac{\varepsilon}{2}-2nd_N^{-1}2^{-K}\right)\notag\\
\leq&P\left(d_N^{-1}\left|\sum_{j\leq n}H_m(X_j)\right|>\frac{\varepsilon}{4}2^{K-1}\right)\notag\\
\leq&\left(\frac{d_n}{d_N}\right)^2\left(\frac{\varepsilon}{4}\right)^{-2}2^{-2K+2}\notag\\
\leq&\left(\frac{d_n}{d_N}\right)^2d_N^2N^{-2}\notag\\
\leq&C\left(\frac{n}{N}\right)^{2-mD}\left(\frac{L(n)}{L(N)}\right)^mN^{-mD+\lambda}\notag\\
\leq&C\left(\frac{n}{N}\right)^{2-mD}N^{-mD+\lambda}\label{abschaetzung8}
\end{align}
for any $\lambda>0$. Remember that $P(\sup|w(x)S_N(n,x)|>\varepsilon)$ is dominated by \eqref{abschaetzung4}. Using \eqref{abschaetzung5},
\eqref{abschaetzung6}, \eqref{abschaetzung7} and \eqref{abschaetzung8}, this yields
\begin{align*}
 P\left(\sup_{x>0}\left|w(x)S_N(n,x)\right|>\varepsilon\right)\leq&C\left(\frac{n}{N}\right)N^{-\gamma}\varepsilon^{-2}\sum_{k=0}^K(k+3)^4+C\left(\frac{n}{N}
\right)^{2-mD}N^{-mD+\lambda}\\
\leq&C\left(\frac{n}{N}\right)N^{-\gamma}\varepsilon^{-2}(K+3)^5+C\left(\frac{n}{N}\right)^{2-mD}N^{-mD+\lambda}\\
\leq&CN^{-\rho}\left(\frac{n}{N}\varepsilon^{-3}+\left(\frac{n}{N}\right)^{2-mD}\right)
\end{align*}
for any $\rho$ with $0<\rho<\min(\gamma,mD-\lambda)$, because of
\begin{align*}
 (K+3)^5=&\left(\left\lfloor\log_2\left(8Nd_N^{-1}\varepsilon^{-1}\right)\right\rfloor+4\right)^5\\
\leq&C\left(\log(\varepsilon^{-1})+\log(CN)\right)^5\\
\leq&C\varepsilon^{-1}N^{\delta}
\end{align*}
for any $\delta>0$.\\
To prove the second case, i.e. $x\in(-\infty,0]$, we set
\begin{equation*}
 y_i(k):=\sup\{y\leq0:w(y)(\Lambda(0)-\Lambda(y))\geq i2^{-k}\}.
\end{equation*}
So we get corresponding versions of \eqref{abstand}, \eqref{beschraenkt1} and \eqref{beschraenkt2}, namely
\begin{align}
 &w(y_{j}(k))\Lambda(y_{j}(k),y_{j-1}(k)-)\notag\\
= &w(y_{j}(k))(-\Lambda(0)+\Lambda(y_{j-1}(k)-)+\Lambda(0)-\Lambda(y_{j}(k)))\notag\\
\leq &w(y_{j}(k))(\Lambda(0)-\Lambda(y_{j}(k)))-w(y_{j-1}(k)-)(\Lambda(0)-\Lambda(y_{j-1}(k)-))\notag\\
\leq &2^{-k},\\
{}\notag\\
 &\sum_{j=0}^{\infty}w(y_j(0))^2F(y_j(0))\notag\\
=&\sum_{j=0}^{\infty}\sum_{i=j}^{\infty}w(y_j(0))^2(F(y_i(0))-F(y_{i+1}(0)))\notag\\
=&\sum_{i=0}^{\infty}\sum_{j=0}^{i}w(y_j(0))^2(F(y_i(0))-F(y_{i+1}(0)))\notag\\
\leq&\Lambda(0)\sum_{i=0}^{\infty}w(y_{i+1}(0))^3(F(y_i(0))-F(y_{i+1}(0)))\notag\\
\leq &C\sum_{i=0}^{\infty}w(y_{i}(0))^3(F(y_i(0))-F(y_{i+1}(0)))\notag\\
<&\infty,\label{beschraenkt3}\\
{}\notag\\
&\sum_{i=0}^{\infty}w(y_{i+1}(k))^2(F(y_i(k))-F(y_{i+1}(k)))\notag\\
\leq&\sum_{i=0}^{\infty}w(y_{i+1}(0))^2(F(y_i(0))-F(y_{i+1}(0)))\notag\\
<&\infty.\label{beschraenkt4}
\end{align}
Now, for any $x\leq0$ and $K\in\NN$ we can find a chain
\begin{equation*}
 -\infty<y_{i_0(x)}(0)\leq y_{i_1(x)}(1)\leq\ldots\leq y_{i_K(x)}(K)\leq x,
\end{equation*}
with $y_{i_k(x)}(k)\leq x\leq y_{i_k(x)-1}(k)$. Using 
\begin{align*}
 &\left|w(x)S_N(n,x)\right|\\
\leq &|w(y_{i_0(x)}(0))S_N(n,y_{i_0(x)}(0))|+|w(y_{i_0(x)}(0))S_N(n,y_{i_0(x)}(0),y_{i_1(x)}(1))|\\
     &+|w(y_{i_1(x)}(1))S_N(n,y_{i_1(x)}(1),y_{i_2(x)}(2))|+\ldots+|w(x)S_N(n,y_{i_K(x)}(K),x)|
\end{align*}
and
\begin{align*}
&\left|w(x)S_N(n,y_{i_K(x)}(K),x)\right|\\
\leq &\left|w(y_{i_K(x)}(K))S_N(n,y_{i_K(x)}(K),y_{i_K(x)-1}(K)-)\right|+2nd_N^{-1}2^{-K}+2^{-K}d_N^{-1}\left|\sum_{j\leq n}H_m(X_j)\right|
\end{align*}
together with \eqref{beschraenkt3} and \eqref{beschraenkt4}, we can finish the proof in the same way as in the first case.
\end{proof}
We are now ready to prove the weighted weak reduction principle. Therefore we can use the original proof by Dehling and Taqqu. 
\begin{proof}[Proof of Theorem \ref{weak-reduction}]
Let $N=2^r$ and $M_N(n):=\sup_{x\in\RR}|w(x)S_N(n,x)|$. Using the stationarity of $(X_j)_{j\geq 1}$ we get for $n_1<n_2\leq N$
\begin{align*}
 M_N(n_1,n_2):=&M_N(n_2)-M_N(n_1)\\
\leq&\sup_{x\in\RR}|w(x)(S_N(n_2,x)-S_N(n_1,x))|\\
\stackrel{D}{=}&M_N(n_2-n_1)
\end{align*}
Together with Lemma \ref{lemma3} we obtain 
\begin{equation*}
 P\left(\max_{j=1,\ldots,2^{r-k}}\left|M_N((j-1)2^k,j2^k)\right|>\varepsilon\right)
\leq CN^{-\rho}(\varepsilon^{-3}+2^{(k-r)(1-mD)}).
\end{equation*}
Since $n=\sum_{k=0}^r\sigma_k2^{r-k}$, $\sigma_k\in\{0,1\}$, we have
\begin{equation*}
 M_N(n)=\sum_{k=0}^r\sigma_kM_N((j_k-1)2^{r-k},j_k2^{r-k}),
\end{equation*}
with some suitable $j_k\in\{1\ldots,2^k\}$. This yields  
\begin{align*}
 P\left(\max_{n\leq N}|M_N(n)|>\varepsilon\right)\leq&P\left(\sum_{k=0}^r\max_{j=1,\ldots,2^{r-k}}\left|M_N((j-1)2^k,j2^k)\right|>\varepsilon\right)\\
\leq&\sum_{k=0}^rP\left(\max_{j=1,\ldots,2^{r-k}}\left|M_N((j-1)2^k,j2^k)\right|>\varepsilon(k+2)^{-2}\right)\\
\leq&CN^{-\rho}\left(\varepsilon^{-3}\sum_{k=0}^{\log_2(N)}(k+2)^6+\sum_{k=0}^{\log_2(N)}2^{(k-r)(1-mD)}\right)\\
\leq&CN^{-\rho}(\varepsilon^{-3}+1).
\end{align*}
For $N\neq2^r$ we have $d_N^{-1}<Cd_{2^r}^{-1}$, with $r=\min\{r:N\leq 2^r\}$ and $C$ independent of $N$ and $r$. Hence
\begin{align*}
 P\left(\max_{n\leq N}|M_N(n)|>\varepsilon\right)\leq&P\left(\max_{n\leq 2^r}|M_{2^r}(n)|>C^{-1}\varepsilon\right)\\
\leq&C2^{-r\rho}(\varepsilon^{-3}+1)\\
\leq&CN^{-\rho}(\varepsilon^{-3}+1).
\end{align*}
\end{proof}
Before we can prove Theorem \ref{nclt} we need one last lemma. More precisely we have to show, that the function $J_m$, which yields the $m$th
order Hermite coefficient of $1_{\{Y_j\leq x\}}$, is bounded with respect to the weighted norm we use.
\begin{Lemma}\label{beschraenktlemma}
If $F$ has a finite $\delta$-th moment then for all $q\in\NN$ we have
\begin{equation*}
 \sup\limits_{x\in\RR}|w(x)J_q(x)|<\infty,
\end{equation*}
where $w(x)=(1+|x|)^{\lambda}$ and $\lambda=\delta/3$.
\end{Lemma}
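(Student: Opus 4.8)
The plan is to exploit that $H_q$ has vanishing Gaussian mean for $q\geq1$, which turns $J_q(x)$ into a \emph{tail} integral that can be controlled by the $\delta$-th moment of $F$. Note first that for $q=0$ we have $J_0\equiv0$ (the left-hand side of \eqref{HD} is centred), so there is nothing to prove and we may assume $q\geq1$. Also, by Cauchy--Schwarz and $EH_q(X_1)^2=q!$ one has $|J_q(x)|\leq\sqrt{q!}$ for every $x$, so $J_q$ is bounded; the whole issue is the behaviour of $w(x)J_q(x)$ as $x\to\pm\infty$.

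First I would use $EH_q(X_1)=0$ for $q\geq1$ to rewrite
\begin{equation*}
 J_q(x)=E\bigl(1_{\{Y_1\leq x\}}H_q(X_1)\bigr)=-E\bigl(1_{\{Y_1> x\}}H_q(X_1)\bigr),
\end{equation*}
which gives $|J_q(x)|\leq E\bigl(1_{\{Y_1>x\}}|H_q(X_1)|\bigr)$ for $x>0$ and, symmetrically, $|J_q(x)|\leq E\bigl(1_{\{Y_1\leq x\}}|H_q(X_1)|\bigr)$ for $x<0$. On any fixed bounded interval both $w(x)$ and $|J_q(x)|\leq\sqrt{q!}$ are bounded, so only the two tails require work, and by symmetry it suffices to treat $x\to+\infty$.

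Next, Cauchy--Schwarz together with $EH_q(X_1)^2=q!$ yields
\begin{equation*}
 |J_q(x)|\leq\sqrt{q!}\,\sqrt{P(Y_1>x)}=\sqrt{q!}\,\sqrt{1-F(x)},
\end{equation*}
and the moment condition \eqref{delta} with Markov's inequality gives $1-F(x)=P(Y_1>x)\leq x^{-\delta}\int|y|^{\delta}dF(y)$ for $x>0$. Hence, with $C$ the universal constant of the paper,
\begin{equation*}
 w(x)|J_q(x)|\leq C(1+x)^{\lambda}x^{-\delta/2}.
\end{equation*}
Since $\lambda=\delta/3<\delta/2$, the right-hand side stays bounded (indeed tends to $0$) as $x\to\infty$. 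The case $x\to-\infty$ is identical after replacing $1-F(x)$ by $F(x)\leq|x|^{-\delta}\int|y|^{\delta}dF(y)$. Combining the bounded-interval estimate with the two tail estimates gives the claimed uniform bound.

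I expect no serious obstacle here: the lemma is soft once one observes that the mean-zero property of $H_q$ converts $J_q(x)$ into a tail integral, which is exactly where the moment assumption enters. The only point worth keeping in mind is that Cauchy--Schwarz costs a factor $1/2$ in the exponent, so it is precisely the gap $\lambda=\delta/3<\delta/2$ that makes $w(x)\sqrt{1-F(x)}$ bounded; this matches the remark after Theorem \ref{nclt} that this step alone would already go through for $\lambda=\delta/2$.
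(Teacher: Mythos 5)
Your proof is correct and follows essentially the same route as the paper: both exploit $EH_q(X_1)=0$ to turn $J_q(x)$ into a tail expectation and then apply Cauchy--Schwarz together with the $\delta$-th moment condition via $2\lambda<\delta$. The only cosmetic difference is that the paper keeps $w(x)$ inside the expectation and dominates $w(x)1_{\{Y_1>x\}}\leq w(Y_1)$ by monotonicity of $w$, whereas you factor out $w(x)$ and control $1-F(x)$ by Markov's inequality.
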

\begin{proof}
Since $2\lambda<\delta$ and condition \eqref{delta} we have $E|w(Y_j)|^2<\infty$. The weight function $w$ is non-increasing on $(-\infty,0]$ and increasing on
$[0,\infty)$. Therefore we get for $x\leq0$ 
\begin{align*}
 |w(x)J_q(x)|&\leq E|w(x)1_{\{Y_j\leq x\}}H_q(X_j)|\\
             &\leq \sqrt{E|w(x)1_{\{Y_j\leq x\}}|^2E|H_q(X_j)|^2}\\
             &\leq \sqrt{E|w(Y_j)|^2E|H_q(X_j)|^2}\\
             &<\infty
\end{align*}
and for $x\geq 0$
\begin{align*}
 |w(x)J_q(x)|&=|E(w(x)(1-1_{\{Y_j\leq x\}})H_q(X_j))|\\
             &\leq \sqrt{E|w(x)1_{\{Y_j> x\}}|^2E|H_q(X_j)|^2}\\
             &\leq \sqrt{E|w(Y_j)|^2E|H_q(X_j)|^2}\\
             &<\infty.
\end{align*}
\end{proof}
\begin{proof}[Proof of Theorem \ref{nclt}]
 Since $C[0,1]$ is separable and $Z_m\in C[0,1]$ a.s., we can use the process convergence
\begin{equation*}
 Z_{m,N}(t):=d_N^{-1}\sum_{j\leq\lfloor Nt\rfloor}H_m(X_j)\stackrel{D}{\longrightarrow} Z_m(t)
\end{equation*}
in $D[0,1]$, investigated by \citet{Taqqu1975,Taqqu1979}, to apply the a.s. representation theorem \citep[page 71]{Pollard}. Therefore it exists processes
$(\tilde{Z}_{m,N}(t))_{t\in[0,1]}$ and $(\tilde{Z}_{m}(t))_{t\in[0,1]}$, with $(\tilde{Z}_{m,N}(t))\stackrel{D}{=}(Z_{m,N}(t))$,
$(\tilde{Z}_{m}(t))\stackrel{D}{=}(Z_{m}(t))$ and
\begin{equation*}
 \left\|\tilde{Z}_{m,N}(\cdot)-\tilde{Z}_{m}(\cdot)\right\|_{\infty}\longrightarrow0\textnormal{~~~a.s.}
\end{equation*}
Using Lemma \ref{beschraenktlemma} we have
\begin{equation*}
 \left\|J_m(\cdot)\tilde{Z}_{m,N}(\cdot)-J_m(\cdot)\tilde{Z}_{m}(\cdot)\right\|_{w}\longrightarrow0\textnormal{~~~a.s.}
\end{equation*}
and this implies
\begin{equation*}
 J_m(x)Z_{m,N}(t)\stackrel{D}{\longrightarrow} J_m(x)Z_m(t)
\end{equation*}
in $D_w\subset D([-\infty,\infty]\times[0,1])$, equipped with the weighted norm $\|\cdot\|_w$. Theorem \ref{nclt} follows by the weighted weak reduction
principle (Theorem \ref{weak-reduction}).
\end{proof}
\textbf{Acknowledgements.}~~ I would like to thank both referees for carefully reading my manuscript and  for their helpful comments.
\bibliography{literatur}
\end{document}